\documentclass[3p]{elsarticle}
\usepackage{amscd, amssymb, amsthm}
\usepackage[all]{xy}
\usepackage{color}
\usepackage{url}
\usepackage{amscd, amssymb, amsfonts, amsthm, amsmath}
\usepackage{mathdots}
\usepackage[breaklinks=true]{hyperref}
\usepackage{multicol}
\usepackage{mathrsfs}
\usepackage{yfonts}

\usepackage[normalem]{ulem}

\usepackage[labelsep=endash]{caption}

\numberwithin{equation}{section}
\usepackage[labelsep=endash]{caption}

\makeatletter
\def\ps@pprintTitle{%
  \let\@oddhead\@empty
  \let\@evenhead\@empty
  \let\@oddfoot\@empty
  \let\@evenfoot\@oddfoot
}
\makeatother

\newcommand{\soc}{\mbox{\rm{Soc}}}
\newcommand{\topp}{\mbox{\rm{Top}}}

\def\ker{\mbox{\rm{Ker}}}

\def\mod{\mbox{\rm{mod}}}
\def\Mod{\mbox{\rm{Mod}}}
\def\ind{\mbox{\rm{ind}}}
\def\add{\mbox{\rm{add}}}

\def\l{\mbox{\rm{length}}}

\def\rad{\hbox{\rm{rad}}}

\def\pd{\mbox{\rm{pd}}}

\newcommand{\fin}{\mbox{\rm{findim}}}
\newcommand{\Fin}{\mbox{\rm{Findim}}}

\newcommand{\Dell}{\mbox{\rm{dell}}}
\newcommand{\Dellm}{\mbox{\rm{Dell}}}
\newcommand{\DDell}{\mbox{\rm{ddell}}}

\def\st{\mbox{\rm{s}}}
\def\tg{\mbox{\rm{t}}}
\def\supp{\mbox{\rm{supp}}}

\begin{document}

\newcommand{\mono}[1]{%
\gdef\puA{#1}}
\newcommand{\puA}{}
\newcommand{\faculty}[1]{%
\gdef\puC{#1}}
\newcommand{\puC}{}
\newcommand{\facultad}[1]{%
\gdef\puD{#1}}
\newcommand{\puD}{}
\newcommand{\N}{\mathbb{N}}
\newcommand{\Z}{\mathbb{Z}}
\newtheorem{teorema}{Theorem}[section]
\newtheorem{teo}{Theorem}[section]
\newtheorem{prop}[teo]{Proposition}
\newtheorem{lema}[teo] {Lemma}
\newdefinition{ej}[teo]{Example}
\newtheorem{obs}[teo]{Remark}
\newtheorem{defi}[teo]{Definition}
\newtheorem{coro}[teo]{Corollary}
\newtheorem{nota}[teo]{Notation}



\title{Delooping levels on Morita context algebras}

\author{Marcos Barrios}
\ead{marcosb@fing.edu.uy}

\author{Marcelo Lanzilotta\corref{cor1}}
\ead{marclan@fing.edu.uy}

\author{Gustavo Mata\corref{cor2}}
\address{Universidad de la Rep\'ublica, Facultad de Ingenier\'ia -  Av. Julio Herrera y Reissig 565, Montevideo, Uruguay} 
\ead{gmata@fing.edu.uy}

\cortext[cor1]{The author was supported by the project of  MathAmSud - 23-MATH-06.}
\cortext[cor2]{Corresponding author}

\begin{abstract}{ } 
We study the behavior of various delooping levels in the setting of Morita context algebras with zero bimodule homomorphisms. Building on the notion of syzygies for modules over such algebras, we establish conditions under which the delooping level (see \cite{G1}) of a Morita context algebra is finite, provided that the component algebras also have finite delooping levels. We further extend these results to other invariants, the global delooping level (see \cite{BLM}) and the derived delooping level (see \cite{GI}), showing how their finiteness is preserved in the Morita context. Several examples illustrate both the applicability of our criteria and the necessity of the hypotheses, highlighting cases where the delooping level of the whole algebra can be smaller than that of its components.
\end{abstract} 

\begin{keyword}
finitistic dimension, delooping level, derived delooping level, Morita context algebras.\MSC 2010 Primary 16W50, 16E30. Secondary 16G10

\end{keyword}

\maketitle

\section{Introduction}

The finitistic dimension conjectures were proposed by H. Bass in 1960 as problems in the context of noetherian rings. The conjectures assert that:

$$\fin(R) = \Fin (R)\ \mbox{and}\ \   \fin(R) < \infty, $$

\noindent where $\Fin(R) = \sup\{\pd(M): M \in \Mod R,\ \pd(M)<\infty\}$ and $\fin(R) = \sup\{\pd(M): M \in \mod R,\ \pd(M)<\infty\}$.\\

For Artin algebras, the first conjecture is known to be false, B. Huisgen-Zimmermann constructed monomial algebras for which it fails. In relation with the second one there is a possible counterexample provided in October 2026 by OpenAI (\url{https://github.com/openai/math/tree/main/preprints}).\\

The delooping level for noetherian rings, denoted by $\Dell$, was introduced by Vincent Gélinas in \cite{G1}. He proved for a noetherian semiperfect ring $R$ that $\Fin (R^{op}) \leq \Dell(R)$.	The authors showed in \cite{KR} that the delooping level of an Artin algebra can be infinite, with $\Fin$ finite.\\
	
The derived delooping level, denoted by $\DDell$, was introduced by Ruoyu Guo and Kiyoshi Igusa in \citep{GI}. They proved for an Artin algebra $\Gamma$ that $$\Fin (\Gamma^{op}) \leq \DDell(\Gamma) \leq \Dell(\Gamma).$$ Thus, compared to the delooping level, the derived delooping level yields a sharper bound on the finitistic dimension. \\

Let $A$ and $B$ be Artin algebras, $M$ an $A$-$B$-bimodule, and $N$ a $B$-$A$-bimodule. Let
$$\alpha : N \otimes_A M \rightarrow B \text{, and } \beta : M \otimes_B N \rightarrow A$$ be bimodule homomorphisms. The associated Morita context $\mathcal{M}=(A, M, N, B, \alpha, \beta)$, gives rise to the Morita context algebra (see \cite{A}):
$$ \Lambda_{(\alpha, \beta)} = \begin{pmatrix}
A & M\\
N & B
\end{pmatrix},$$
where the multiplication in $\Lambda_{(\alpha, \beta)}$ is given by 
$$\begin{pmatrix}
a & m\\
n & b
\end{pmatrix} \cdot \begin{pmatrix}
a' & m'\\
n' & b'
\end{pmatrix} = \begin{pmatrix}
aa'+\beta(m\otimes n') & am'+mb'\\
na' + bn' & bb' + \alpha(n \otimes m')
\end{pmatrix}$$
where the maps $\alpha$ and $\beta$ satisfy $\alpha(n \otimes m)n' = n\beta(m \otimes n' )$ and $m\alpha(n \otimes m') = \beta(m \otimes n)m' $, to make $\Lambda_{(\alpha, \beta)}$ associative.\\ 

A Morita context algebra is said to have zero bimodule homomorphisms if $\alpha = \beta = 0$. One advatage for working in this setting is that the module categories Mod \!$A$, Mod\! $B$, Mod $\begin{pmatrix}
	A & 0\\
	N & B
\end{pmatrix},$ and  Mod $\begin{pmatrix}
A & M\\
0 & B
\end{pmatrix},$ are fully embedded into the module category Mod \!$\Lambda_{(0, 0)}$.

This family of  algebras has been extensively studied from the homological point of view. We might mention, among other works: \cite{BM2}, \cite{BM3}, \cite{CRS}, \cite{GP*}, \cite{GP}, \cite{GX} and \cite{LV}.

In this paper, we study homoligcal invariants, as the delooping level, derived delooping level, etc., for Morita context algebras which have zero bimodule morphisms.
We now give a brief outline of the paper. All necessary preliminaries are presented in Section 2. In Section 3, we describe and compute the syzygies for module categories over Morita context algebras. The main result, Theorem \ref{main theorem}, appears in Section 4.\\

\noindent {\bf Main result} (Theorem \ref{main theorem})
Consider $A$ and $B$ Artin algebras,  with finite delooping level.  If
$ \Lambda_{(0, 0)} = \begin{pmatrix}
	A & M\\
	N & B
\end{pmatrix},$
with $_AM$ and $_BN$ semisimples modules, and there exists $n\in \mathbb{N}$ such that $\Omega_{\Lambda}^n((\oplus_{a \in \mathcal{E}_A}  \tilde{S}_a )\oplus (\oplus_{b \in \mathcal{E}_B}  \tilde{S}_b))$ is infinitely deloopable (see Lemma \ref{sizigia}), then $\Dell(\Lambda) < \infty$. \\

Also in this section, we provide various families of algebras in which the hypothesis of the above theorem is satisfied. Section 5 is devoted to study other homological invariants related to delooping level. Finally, Section 6 is devoted to present several examples.
	
\section{Preliminaries}

Throughout this article, $A, B, \Lambda, \Gamma$ are Artin algebras. For an Artin algebra $\Gamma$,  $\Mod \Gamma$ is the category of right $\Gamma$-modules, and $\mod \Gamma$ is the full subcategory of $\Mod \Gamma$ determined by the finitely generated right $\Gamma$-modules. Also $\ind \Gamma$ is the full subcategory of $\mod \Gamma$ formed by all the indecomposable modules, and $\mathcal{P}_{\Gamma}$ is the class of projective $\Gamma$-modules. 

We use $\mathcal{S}_{\Gamma}$ for the set of iso-classes of simple $\Gamma$-modules, $\Gamma_0 = \oplus_{S \in \mathcal{S}_{\Gamma}}S = \frac{\Gamma}{\rad \Gamma}$, where $\rad \Gamma$ is the Jacobson radical of $\Gamma$. If $M \in \mod \Gamma$, $\soc(M)$ denotes its socle. We denote by $\mathcal{E}_{\Gamma} = \{e_1,\ldots, e_n\}$ a complete set of primitive orthogonal idempotents of $\Gamma$. 

For $M\in \mod \Gamma$ we denote by $P(M)$ its projective cover, by $\Omega(M)$ its syzygy, by $\rad M$ its radical and by $M^k = \displaystyle \oplus_{i=1}^k M$. For $M, T\in \mod \Gamma$  we denote $M  \stackrel{\oplus}{\hookrightarrow} T$ if there exists $N\in \mod \Gamma$ such that $M\oplus N = T$.

\begin{teo}(Theorem 1 from \cite{S}) \label{radical}
If $\mathcal{M}=(A, M, N, B, \alpha, \beta)$ is a Morita context and $ \Lambda_{(\alpha, \beta)} = \begin{pmatrix}
A & M\\
N & B
\end{pmatrix},$ the radical of the algebra $ \rad \Lambda_{(\alpha, \beta)} = \begin{pmatrix}
\rad A & M'\\
N' & \rad B
\end{pmatrix},$ where  $M'=\{ m \in  M : Nm \subset \rad B\}$, and $N' = \{n \in N : Mn \subset \rad A \}$.
\end{teo}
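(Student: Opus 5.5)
We will verify the description by using the standard characterization of the Jacobson radical of an Artin algebra: $\rad\Lambda$ is the largest two-sided ideal $J$ of $\Lambda=\Lambda_{(\alpha,\beta)}$ such that $1-x$ is invertible for every $x\in J$. Equivalently, for a finite-dimensional (Artin) algebra, it is the intersection of the annihilators of the simple modules. Write $e=\begin{pmatrix}1&0\\0&0\end{pmatrix}$ and $f=1-e$. Then $e\Lambda e=A$, $f\Lambda f=B$, $e\Lambda f=M$ and $f\Lambda e=N$. We will use the classical fact that for an idempotent $e$ one has $e(\rad\Lambda)e=\rad(e\Lambda e)$. This already gives the diagonal entries $\rad A$ and $\rad B$. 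Since $\rad\Lambda=\bigoplus e_i(\rad\Lambda)e_j$ as a two-sided ideal, it remains to identify the off-diagonal corners $e(\rad\Lambda)f\subseteq M$ and $f(\rad\Lambda)e\subseteq N$.

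\emph{Inclusion $\subseteq$.} Let $m\in e(\rad\Lambda)f$, viewed as the matrix with only entry $m$ in the $(1,2)$ position. For any $n\in N$, the product of the matrix with $n$ in position $(2,1)$ and the matrix of $m$ lies in $\rad\Lambda$, since $\rad\Lambda$ is an ideal. By the multiplication formula this product is the matrix with $\alpha(n\otimes m)$ in position $(2,2)$, which lies in $f(\rad\Lambda)f=\rad B$. Hence $Nm\subseteq\rad B$ (reading $Nm$ as $\alpha(N\otimes m)$), so $m\in M'$. Symmetrically, $f(\rad\Lambda)e\subseteq N'$.

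\emph{Inclusion $\supseteq$.} Set
\[
J=\begin{pmatrix}\rad A & M'\\ N' & \rad B\end{pmatrix}.
\]
First we check that $J$ is a two-sided ideal, using the multiplication formula and the compatibility identities of $\alpha$ and $\beta$. For example, if $m\in M'$ and $a\in A$, then $\alpha(n\otimes am)=\alpha(na\otimes m)\in\rad B$, so $am\in M'$. The products of the off-diagonal parts give $\beta(m\otimes n')$ with $m\in M'$, and this lands in $\rad A$ by the definition of $N'$ (and similarly for $\alpha$). Mixed terms such as $(\rad A)M$ must lie in $M'$: for $r\in\rad A$ we have $\alpha(n\otimes rm)=\alpha(nr\otimes m)$, which lies in $\alpha(N\otimes M)$ and is killed appropriately because $nr\in N\rad A$. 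Here we will use $\beta$-compatibility to show that it lands in $\rad B$. This ideal verification is routine but is the main bookkeeping step.

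Second, we show that $J$ acts by zero on every simple $\Lambda$-module, or alternatively that $J$ is nilpotent modulo $\begin{pmatrix}\rad A&0\\0&\rad B\end{pmatrix}$. The cleanest route is to show that $1-x$ is invertible for each $x\in J$. Modulo the ideal generated by $\rad A$ and $\rad B$, the square of an off-diagonal element of $J$ has diagonal entries $\beta(m\otimes n)\in\rad A$ and $\alpha(n\otimes m)\in\rad B$. So $x^2$ lies in a diagonal-radical part, and the diagonal-radical part is nilpotent modulo off-diagonal contributions. A filtration argument then gives that $J$ is a nil (indeed nilpotent) ideal, hence $J\subseteq\rad\Lambda$.

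I expect this last step to be the main obstacle: controlling powers of $J$ when $\alpha,\beta\neq0$. I would handle it by passing to $\bar\Lambda=\Lambda/\Lambda'$, where $\Lambda'$ is the ideal generated by $\rad A$ and $\rad B$. In $\bar\Lambda$ the image of $J$ squares into the image of $\rad A\oplus\rad B$, which is zero there. Hence $J^2\subseteq\Lambda'$, and $\Lambda'$ is nilpotent because $\rad A$ and $\rad B$ are nilpotent and the $\alpha,\beta$ terms are absorbed by the same compatibility relations. This gives $J\subseteq\rad\Lambda$, and equality follows.
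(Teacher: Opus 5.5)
Your inclusion $\rad\Lambda\subseteq J$ is correct. Your strategy for the reverse inclusion is also viable: show $J$ is an ideal with $J\cdot J$ inside the nilpotent ideal $\Lambda'$. The paper only quotes this theorem without proof, so I am judging your argument on its own. The gap is the claim that $J$ is a \emph{two-sided} ideal by routine bookkeeping.

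Closure under right multiplication, $J\Lambda\subseteq J$, needs the following:
\begin{itemize}
\item $\beta(m_0\otimes n)\in\rad A$ for $m_0\in M'$ and \emph{arbitrary} $n\in N$;
\item $\alpha(n_0\otimes m)\in\rad B$ for $n_0\in N'$ and arbitrary $m\in M$;
\item $(\rad A)M\subseteq M'$ and $(\rad B)N\subseteq N'$.
\end{itemize}
None of these follows from the definitions, because $M'$ only controls $\alpha(N\otimes m_0)$ and $N'$ only controls $\beta(M\otimes n_0)$. Your sentence that $\beta(m\otimes n')$ ``lands in $\rad A$ by the definition of $N'$'' covers only products whose $N$-factor already lies in $N'$, i.e.\ $\Lambda J$ and $JJ$. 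For $(\rad A)M\subseteq M'$ you only announce that compatibility will be used.

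What is hidden here is the symmetry $\alpha(N\otimes m)\subseteq\rad B\iff\beta(m\otimes N)\subseteq\rad A$. This is the real content of the theorem, and it needs a radical-theoretic argument, not just the bimodule identities. For instance, use Jacobson's lemma in $\Lambda$ with $u=\begin{pmatrix}0&am\\0&0\end{pmatrix}$ and $v=\begin{pmatrix}0&0\\n&0\end{pmatrix}$. If $Nm\subseteq\rad B$, then $1-vu=\begin{pmatrix}1&0\\0&1-\alpha(na\otimes m)\end{pmatrix}$ is invertible. Hence so is $1-uv=\begin{pmatrix}1-a\beta(m\otimes n)&0\\0&1\end{pmatrix}$. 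So $1-a\beta(m\otimes n)$ is a unit of $A$ for every $a$, i.e.\ $\beta(m\otimes n)\in\rad A$.

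The cheapest repair is to drop two-sidedness altogether. The inclusion $\Lambda J\subseteq J$ uses only $\beta(M\otimes N')\subseteq\rad A$, $\alpha(N\otimes M')\subseteq\rad B$, $AM'+M\rad B\subseteq M'$ and $N\rad A+BN'\subseteq N'$. Each of these is immediate from the definitions, balancedness, and the bimodule property of $\alpha,\beta$. Your computation $J\cdot J\subseteq\Lambda'$ likewise uses only the definitions. Hence $J$ is a nilpotent left ideal, so $J\subseteq\rad\Lambda$. Two-sidedness of $J$ and the symmetry above then follow for free from $J=\rad\Lambda$.

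Separately, the nilpotency of $\Lambda'$ has nothing to do with $\alpha,\beta$ being ``absorbed''. For $I=\begin{pmatrix}\rad A&0\\0&0\end{pmatrix}$ one has $I\Lambda I=I(e\Lambda e)I\subseteq I^2$, so $(\Lambda I\Lambda)^k\subseteq\Lambda I^k\Lambda$. The same holds for $\rad B$, and a sum of two nilpotent ideals is nilpotent.
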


In case $\alpha=0=\beta$ then: $$ \rad \Lambda_{(0, 0)} = \begin{pmatrix}
	\rad A & M\\
	N & \rad B
\end{pmatrix}.$$

\vspace{0.4cm}


In what follows, we shall review the definitions of some homological measures defined recently.  

\begin{defi}(\cite{G1})
Let $\Gamma$ be an Artin algebra.
\begin{itemize}
\item[-] The \textbf{delooping level} of a $\Gamma$-module $X$ is defined as
$$\Dell(X) = \inf \{n \in \mathbb{N} : \Omega^n(X) \text{ is a direct summand of } \Omega^{n+1}(Y) \text{ in } \underline{\mod} \Gamma \text{, for some } Y\in \mod \Gamma\}.$$
\item[-] If no such $n$ exist, $\Dell(X) = \infty$.
\end{itemize}
\end{defi}
 
 \vspace{0.3cm}

\noindent Observe that for a module $X \in \mod \Gamma$, then $\Dell(X) = 0 $ if and only if $X \hookrightarrow P$, with $P \in \mathcal{P}_{\Gamma}$. In particular if $S$ is a semisimple module $\Dell (S) = 0$ if and only $S \stackrel{\oplus}{\hookrightarrow} \soc(P)$, with $P \in \mathcal{P}_{\Gamma}$.\\

\begin{defi} (\cite{GI})
Let $\Gamma$ be an Artin algebra.
\begin{itemize}

\item[-] Given $k \in \mathbb{Z}^+$, the \textbf{$k$-delooping level} of a $\Gamma$-module $X$ is defined as 
$$k-\Dell(X) = \inf \{n \in \mathbb{N} : \Omega^n(X) \stackrel{\oplus}{\hookrightarrow}  \Omega^{n+k}(Y)\text{ in } \underline{\mod} \Gamma \text{, for some } Y \in \mod \Gamma\}.$$
If no such $n$ exist, $k-\Dell(X) = \infty$.

\item[-] The $\Gamma$-module $X$ is \textbf{infinitely deloopable} if $k-\Dell(X) = 0$, $\forall k \in \mathbb{Z}^+$. 

\end{itemize}
\end{defi}

\vspace{0.3cm}

\noindent Let’s summarise a few observations about infinitely deloopable modules:

\begin{obs}\label{318r} 
	\begin{enumerate}
		\item 
First observe that $X \in \mod \Gamma$ is infinitely deloopable if and only if for each $n\in \mathbb{Z}^+$ there is a module $X_n \in \mod \Gamma$ and a projective module $P_n$ such that $X  \stackrel{\oplus}{\hookrightarrow} \Omega^n(X_n) \oplus P_n $. That is, infinitely deloopable indecomposable modules are those indecomposable modules in add$(\displaystyle \bigcap_{n\in \N} \Omega^n (\mod \Gamma))$.

\item 
 Notice also that infinitely deloopable (not projective) modules have infinite injective dimension.
\item 
Next observe that if $X$ is infinitely deloopable, then $\Omega^{n}(X)$ is infinitely deloopable, for each $n\in \N$.
\item 
Note that for any finite set $\mathcal{T}=\{T_1, T_2, \cdots,T_m\}$ of indecomposable $\Gamma$-modules, there exists $k\in \N$ such that $\mathcal{T}=\mathcal{T}_0 \sqcup \mathcal{T}_{\infty}$ with:
\begin{center}
$\mathcal{T}_0\ \cap$ add$\displaystyle (\bigcup_{h\geq k} \Omega^h (\mathcal\mod \Gamma))=\emptyset$ 
and  $\mathcal{T}_{\infty}\subset $ $\displaystyle  \bigcap_{j\geq k}$add$\displaystyle (\bigcup_{h\geq j} \Omega^h (\mathcal \mod \Gamma)) = $ add$(\displaystyle \bigcap_{n\in \N} \Omega^n (\mod \Gamma))$.
\end{center}

	\end{enumerate}
\end{obs}

\begin{defi} (\cite{GI})
	Let $\Gamma$ be an Artin algebra.\\
The \textbf{derived delooping level} of a $\Gamma$-module $X$ is defined as:
$$\DDell (X) = \inf\{m \in \mathbb{N} : \text{ there exists } \ n \leq m \text{ and an exact sequence in } \mod \Gamma $$ $$ 0 \rightarrow C_n \rightarrow \ldots \rightarrow C_0 \rightarrow X \rightarrow 0,\  \text{ where } (i+1)-\Dell (C_i) \leq  m-i,\ i = 0, 1, \ldots , n\}.$$
\end{defi}

\vspace{0.3cm}

\noindent Observe that given $X \in \mod \Gamma$,
 $\DDell(X) = 0 $ if and only if $\Dell(X)=0$, see Proposition \ref*{DDell=0}.

\vspace{0.3cm}

Let us recall some older homological definitions, given by K. Igusa and D. Zacharia in \cite{IZ} (see also \cite{IZ0}). 

\begin{defi}(\cite{IZ})
	Let $\Gamma$ be an Artin algebra. A module $X\in \mod \Gamma$ is
	
	\begin{itemize}
		
		\item[-] periodic if $X  \stackrel{\oplus}{\hookrightarrow} \Omega^k(X)$ for some $k\in \mathbb{N}^*$. The smallest $k$ is called the period of $X$;
		
		\item[-] virtually periodic if $X  \stackrel{\oplus}{\hookrightarrow} \Omega^k(X')$ for some periodic $X'$ and $k\in \mathbb{N}$.
		
	\end{itemize} 
	
\end{defi}
\vspace{0.2cm}

From the previous definitions, following remark is straightforward:

\begin{obs}\label{31/8}
Let $\Gamma$ be an Artin algebra and $X \in \mod \Gamma$.
\begin{itemize}

\item[-]  If $X$ is periodic or virtually periodic, then $k-\Dell(X) = 0$, $\forall k \in \mathbb{N}$, and therefore $X$ is infinitely deloopable. 

\item[-]  If $\Omega^{n}(X)$ is virtually periodic, then $k-\Dell(X) \leq n$, $\forall k \in \mathbb{N}$.

\end{itemize}
\end{obs}

\vspace{0.3cm}

\begin{defi}(\cite{BLM}, \cite{G1}, \cite{GI})
For an Artin algebra $\Gamma$, we define:
\begin{itemize}
\item[-] its delooping level (\cite{G1})  as 
$\Dell(\Gamma) = \sup \{\Dell(S): S \in \mathcal{S}_{\Gamma}\}$;

\item[-] its $k$-delooping level (\cite{GI}) as 
$k-\Dell(\Gamma) = \sup \{k-\Dell(S): S \in \mathcal{S}_{\Gamma}\}$;

\item[-] its derived delooping level (\cite{GI}) as 
$\DDell(\Gamma) = \sup \{\DDell(S): S \in \mathcal{S}_{\Gamma}\}$;

\item[-] its global delooping level (\cite{BLM}) as
$\Dellm(\Gamma) = \sup \{\Dell(M): M \in \mod \Gamma\}$.

\end{itemize}
\end{defi}

\vspace{0.3cm}
The following example (\cite{KR}) shows that for an Artin algebra $\Gamma$, its delooping level can be infinite.

\begin{ej}(\cite{KR})\label{ejemplo infinito}
Let $\Bbbk$ be a field with $q \in \Bbbk$ an element with infinite multiplicative order.
Consider $\Gamma(q)= \frac{\Bbbk Q}{I_q}$, the algebra given by:
$$Q :  \xymatrix{1 \ar[r]^{\delta}  & 2 \ar@(ru,rd)^{\beta} \ar@(lu,ru)^{\alpha} \ar@(ld,rd)_{\gamma}}$$
and the ideal $I_q = \langle \alpha^2, \beta^2, \gamma^2, \gamma \beta, \beta \alpha + q\alpha \beta, \gamma \alpha - \alpha \gamma, \beta \gamma - \alpha \gamma, \delta \alpha - q\delta \beta, F^3 \rangle $, where $F$ is the ideal generated by the arrows.\\

In \cite{KR} the authors showed that $\Dell(S_2) = 0$ and $\Dell(S_1) = \infty$, hence $\Dell(\Gamma) = \infty$.

\end{ej}

\begin{teo} \label{Igualdad}
If $\Gamma$ is an Artin algebra from one of the following families of algebras:

\begin{itemize}
\item[-] Gorenstein algebras (\cite{G1}),

\item[-] radical square zero algebras (\cite{G2}),

\item[-] truncated path algebras (\cite{BLM}), 
\end{itemize}

then $\Fin({\Gamma}^{op}) = \Dell(\Gamma)$.
\end{teo}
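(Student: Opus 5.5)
The statement collects three results from the literature, so the plan is to reduce each item to its cited source. The common ingredient is Gélinas' inequality $\Fin(\Gamma^{op}) \leq \Dell(\Gamma)$, which holds for every Artin algebra (\cite{G1}). In each family it therefore suffices to prove the reverse inequality $\Dell(\Gamma) \leq \Fin(\Gamma^{op})$; when $\Fin(\Gamma^{op})=\infty$ there is nothing to prove.

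\emph{Gorenstein case.} Let $d$ be the common value of the injective dimensions of $\Gamma_\Gamma$ and ${}_\Gamma\Gamma$. It is classical that $\Fin(\Gamma^{op}) = d$. For any simple $S$, the module $\Omega^d(S)$ is Gorenstein projective (maximal Cohen--Macaulay). A Gorenstein projective module $G$ admits a complete resolution, so for every $k$ we have $G \cong \Omega^{k}(G_k)$ up to projective summands, for some module $G_k$. In particular $\Omega^d(S)$ is a direct summand of $\Omega^{d+1}(Y)$ in $\underline{\mod}\,\Gamma$ for a suitable $Y$, so $\Dell(S)\le d$. This is exactly the argument of \cite{G1}.

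\emph{Radical square zero case.} Here all syzygies are semisimple, so $\Dell$ becomes a combinatorial quantity on the quiver. The key steps, following \cite{G2}, are:
\begin{itemize}
\item[-] compute $\Omega(S_i)$ as the direct sum of the simples at the targets of the arrows starting at $i$;
\item[-] decide when a semisimple module is a summand of a syzygy: the simple $S_j$ occurs in some $\Omega(Y)$ exactly when $j$ is not a source;
\item[-] compare with $\Fin(\Gamma^{op})$, which for these algebras is computed by the same quiver data (lengths of paths to sinks and sources).
\end{itemize}

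\emph{Truncated path algebras.} Syzygies are again described through paths of bounded length, and \cite{BLM} matches $\Dell$ with $\Fin(\Gamma^{op})$ combinatorially.

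The main obstacle, if one wanted a self-contained argument rather than citations, is the combinatorial matching in the last two families. One must show that the number of steps after which every simple becomes a summand of a deeper syzygy equals the largest finite projective dimension of left modules. I would follow the cited proofs rather than reprove them, since the theorem is stated here only as background.
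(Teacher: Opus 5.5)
Your proposal matches the paper, which gives no proof of its own for this theorem and simply cites \cite{G1}, \cite{G2} and \cite{BLM}. Your reduction to the reverse inequality via Gélinas' bound $\Fin(\Gamma^{op})\le\Dell(\Gamma)$, and your Gorenstein argument (every $d$-th syzygy is Gorenstein projective, hence stably an arbitrarily high syzygy), are correct and more detailed than what the paper provides. The one imprecision is in your radical square zero sketch: the criterion relevant to $\Dell(S)\le n$ is whether $S_j$ is a summand of an $(n+1)$-st syzygy, i.e.\ whether some path of length $n+1$ ends at $j$, not merely whether $S_j$ is a summand of a first syzygy; since you defer to \cite{G2} for that case, this does not affect the argument.
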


However there exist examples of Artin algebras (see \cite{BLM}) where $\Fin({\Gamma}^{op}) \not = \Dell(\Gamma)$.

\begin{teo} (\cite{GI})\label{GUO&IGUSA} For an Artin algebra $\Gamma$, we have
 $$\Fin (\Gamma^{op}) \leq  \DDell (\Gamma) \leq \Dell (\Gamma).$$
\end{teo}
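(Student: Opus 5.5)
The second inequality is immediate from the definitions, and the first I would prove by a Tor-vanishing argument in the style of Gélinas' proof of $\Fin(\Gamma^{op})\le\Dell(\Gamma)$.

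\textbf{Second inequality.} For any $X\in\mod\Gamma$, take $n=0$ and the trivial resolution $0\to C_0=X\to X\to 0$. Then $1-\Dell(C_0)=\Dell(X)$. Hence $\DDell(X)\le\Dell(X)$, and taking the supremum over the simple modules gives $\DDell(\Gamma)\le\Dell(\Gamma)$.

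\textbf{First inequality.} Set $m=\DDell(\Gamma)<\infty$; otherwise there is nothing to prove. Let $L$ be a left $\Gamma$-module of finite projective dimension $d$, and suppose $d>m$ in order to reach a contradiction. Since $\Gamma$ is an Artin algebra it is left perfect, so flat left modules are projective. Consequently a minimal projective resolution of $L$ gives $\mathrm{Tor}^\Gamma_d(\Gamma_0,L)\neq 0$. Thus it suffices to show that $\mathrm{Tor}_d^\Gamma(S,L)=0$ for every simple right module $S$.

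\textbf{Key lemma.} Let $C\in\mod\Gamma$ with $k-\Dell(C)=\ell$, and let $t$ satisfy $t-\ell\ge 1$. Then $\mathrm{Tor}_t(C,L)=0$ whenever $t+k>d$. To see this, first apply dimension shifting:
$$\mathrm{Tor}_t(C,L)\cong \mathrm{Tor}_{t-\ell}(\Omega^\ell C,L),$$
which is valid because $t-\ell\ge1$. Projective summands do not affect $\mathrm{Tor}_{\ge1}$. Since $\Omega^\ell C\stackrel{\oplus}{\hookrightarrow}\Omega^{\ell+k}(Y)\oplus P$, the group above is a direct summand of
$$\mathrm{Tor}_{t-\ell}(\Omega^{\ell+k}Y,L)\cong\mathrm{Tor}_{t+k}(Y,L),$$
and this vanishes because $t+k>d=\pd L$.

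\textbf{Applying the lemma to a resolution of $S$.} Choose an exact sequence $0\to C_n\to\cdots\to C_0\to S\to 0$ with $n\le m$ and $(i+1)-\Dell(C_i)=\ell_i\le m-i$. Apply the lemma to $C_i$ with $t=d-i$ and $k=i+1$. Then $t-\ell_i\ge d-i-(m-i)=d-m\ge1$ and $t+k=d+1>d$. Hence $\mathrm{Tor}_{d-i}(C_i,L)=0$ for all $i=0,\dots,n$.

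\textbf{Dimension shifting along the resolution.} Split the sequence into short exact sequences
$$0\to K_i\to C_i\to K_{i-1}\to 0,$$
where $K_{-1}=S$ and $K_{n-1}=C_n$. The long exact Tor sequences give
$$\mathrm{Tor}_{d-i}(C_i,L)\to\mathrm{Tor}_{d-i}(K_{i-1},L)\to\mathrm{Tor}_{d-i-1}(K_i,L).$$
Because the left-hand term is zero, the map $\mathrm{Tor}_{d-i}(K_{i-1},L)\to\mathrm{Tor}_{d-i-1}(K_i,L)$ is injective. Composing these injections shows that $\mathrm{Tor}_d(S,L)$ injects into $\mathrm{Tor}_{d-n}(C_n,L)$, which is zero by the previous step; all indices stay $\ge1$ since $n\le m<d$. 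Therefore $\mathrm{Tor}_d(S,L)=0$ for every simple $S$, contradicting $\mathrm{Tor}_d(\Gamma_0,L)\neq0$. Hence $d\le m$, so $\Fin(\Gamma^{op})\le\DDell(\Gamma)$.

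\textbf{Main obstacle.} The delicate point is the index bookkeeping: the shift $(i+1)$ in the definition of $\DDell$ must exactly offset the homological degree $d-i$ reached after $i$ steps along the resolution. A second point needing care is that $L$ may be infinitely generated, so the non-vanishing of $\mathrm{Tor}_d(\Gamma_0,L)$ at the top degree has to be justified through perfectness of $\Gamma$ rather than finite generation.
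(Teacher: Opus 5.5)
Your proof is correct. The paper gives no proof of this theorem; it quotes it from Guo--Igusa \cite{GI}, so there is no internal argument to compare against. What you wrote is a self-contained version of the standard $\mathrm{Tor}$-vanishing argument, namely Gélinas' argument for $\Fin(\Gamma^{op})\le\Dell(\Gamma)$ extended along a resolution. The bookkeeping is right: the shift $k=i+1$ makes $\mathrm{Tor}_{d-i}(C_i,L)$ vanish, and the long exact sequences inject $\mathrm{Tor}_d(S,L)$ into $\mathrm{Tor}_{d-n}(C_n,L)=0$.

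The one step stated too loosely is the nonvanishing of $\mathrm{Tor}_d(\Gamma_0,L)$. There you run two different justifications together. You can finish it in either of two ways:
\begin{itemize}
\item \emph{Projective covers.} Since $\Gamma$ is perfect, every module has a projective cover, so a minimal projective resolution of $L$ exists. Its differentials land in $(\rad\Gamma)P_{i-1}$. Hence $\mathrm{Tor}_d(\Gamma_0,L)\cong P_d/(\rad\Gamma)P_d$, which is nonzero by Bass' Nakayama lemma for projective modules.
\item \emph{Flatness.} Suppose $\mathrm{Tor}_d(S,L)=0$ for all simple $S$. Then $\mathrm{Tor}_1(-,\Omega^{d-1}L)$ vanishes on all finite length modules, and hence on all modules, because $\mathrm{Tor}$ commutes with direct limits. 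So $\Omega^{d-1}L$ is flat, hence projective since $\Gamma$ is perfect. This contradicts $\pd L=d$.
\end{itemize}
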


As a consequence of Theorems \ref{Igualdad} and \ref{GUO&IGUSA}, is obtained the following result.

\begin{coro}
Let $\Gamma$ be an Artin algebra from one of the following families:

\begin{itemize}
\item[-] Gorenstein algebras,

\item[-] radical square zero algebras,

\item[-] truncated path algebras.
\end{itemize}

Then $\DDell(\Gamma)= \Fin({\Gamma}^{op})$.
\end{coro}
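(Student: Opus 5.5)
The corollary follows by combining the two displayed theorems. Let $\Gamma$ belong to one of the three listed families. Theorem \ref{GUO&IGUSA} applies to every Artin algebra and gives the chain
$$\Fin(\Gamma^{op}) \leq \DDell(\Gamma) \leq \Dell(\Gamma).$$
For the three families in question, Theorem \ref{Igualdad} gives $\Fin(\Gamma^{op}) = \Dell(\Gamma)$. This comes from \cite{G1} in the Gorenstein case, from \cite{G2} in the radical square zero case, and from \cite{BLM} in the truncated path algebra case.

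Substituting this equality into the chain, the two outer terms of the sandwich coincide. The middle term is therefore forced to equal them, so $\DDell(\Gamma) = \Fin(\Gamma^{op}) = \Dell(\Gamma)$. This includes the statement of the corollary.

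The only point needing care is the case where these quantities are infinite. The inequalities of Theorem \ref{GUO&IGUSA} are read in $\mathbb{N}\cup\{\infty\}$. If $\Fin(\Gamma^{op})=\infty$, the left inequality already forces $\DDell(\Gamma)=\infty$. If it is finite, the equality with $\Dell(\Gamma)$ pins down $\DDell(\Gamma)$ as a finite number. The squeeze therefore works in both cases.

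I expect no real obstacle, since all the substance is in the cited results \cite{G1}, \cite{G2}, \cite{BLM} and \cite{GI}. The one thing to check is that both theorems use the same side conventions, namely $\Fin$ of the opposite algebra and $\Dell$ and $\DDell$ computed on right $\Gamma$-modules. Both statements use exactly this convention, so the argument goes through.
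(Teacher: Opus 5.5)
Your proposal is correct and is the paper's own argument: the corollary is stated there as a direct consequence of Theorem \ref{Igualdad} together with the chain $\Fin(\Gamma^{op}) \leq \DDell(\Gamma) \leq \Dell(\Gamma)$ of Theorem \ref{GUO&IGUSA}, which squeezes the middle term. Your remark on the infinite case is a correct extra clarification that the paper leaves implicit.
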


\section{Syzygies on Morita context algebras}	

In this section we investigate the structure of syzygies of modules over Morita context algebras, based on previous works as \cite{BM2}, \cite{GP}, and  \cite{LV}. We introduce key lemmas and remarks about how syzygy modules decompose in these algebras. These technical results provide the foundation for analyzing delooping levels in the following sections.\\

\noindent Let $\Lambda$ be an Artin algebra with $\mathcal{E}_A=\{e_1, \ldots, e_n\}$ a complete set of primitive orthogonal idempotents. For $X \in \mod \Lambda$, we recall that $\supp(X)$ is the subset of $\mathcal{E}_A=\{e_1, \ldots, e_n\}$ determined by the idempotents $e_j$ such that $Xe_j \not = \{0\}$.

\begin{obs}\label{obssoporte}
Let $X,Y\in  \mod \Lambda$, and let $f\in $Hom$_{\Lambda}(X,Y)$, then $\supp(f(X)) \subset \supp(X) \cap \supp(Y)$.
\end{obs}

\begin{lema}\label{lema1}
Let $\Lambda$ be an Artin algebra, and let $X,Y\in  \mod \Lambda$ such that $\supp(X) \cap \supp(Y) = \emptyset$. If $M$ is an indecomposable submodule of $X\oplus Y$, then either $M$ is a submodule of $X$ or a submodule of $Y$.
\begin{proof}

Suppose that $X = \displaystyle \bigoplus_{i=1}^{i=k} Xe_i $ and $Y = \displaystyle \bigoplus_{i=k+1}^{i=n} Ye_i$, as vectorial spaces. Consider  $M = M_X \oplus M_Y$ as vectorial spaces, where $M_X = Me_1 \oplus Me_2 \oplus \ldots \oplus Me_k \subset X$ and $M_Y = Me_{k+1}\oplus Me_{k+2} \oplus \ldots \oplus Me_n \subset Y$. Let $\alpha \in \Lambda$, $m \in M_X$ and consider $m\cdot \alpha \in M$. Assume $m\cdot \alpha   = x + y$ where $x \in M_X \subset X$ and $y\in M_Y \subset Y$. But $m \in M_X\subset X$, then $m\cdot \alpha \in X$ and then $y = 0$. So we deduce that $M_X$ is a submodule of $M$. Analogously, we also deduce that $M_Y$ is a submodule of $M$. Hence $M = M_X\oplus M_Y$ is a direct sum of $\Lambda$-modules, but $M$ is an indecomposable module. Therefore $M_X=(0)$ or $M_Y=(0)$. Then $M\subset X$ or $M\subset Y$.
%
%
\end{proof}
\end{lema}

\begin{lema}\label{lema2}
Let $\Lambda_{(\alpha,\beta)} = \begin{pmatrix}
A & M \\
N & B\\
\end{pmatrix}$ be a Morita context algebra. If $0 \rightarrow X \stackrel{f}{\rightarrow} Y_1 \oplus Y_2\stackrel{\pi}{\rightarrow} Z \rightarrow 0$ is a short exact sequence such that:

\begin{enumerate}
\item $X$, $Y_1 \in \mod A$, $Y_2, Z \in \mod B$, then $X \cong Y_1$ and $Y_2 \cong Z$;

\item $X$, $Y_1 \in \mod B$, $Y_2, Z \in \mod A$, then $X \cong Y_1$ and $Y_2 \cong Z$.
\end{enumerate}

\begin{proof}
We only prove the first item, the second one is dual. Since $\supp(Y_1) \cap \supp(Y_2) = \{0\}$, by Lemma \ref{lema1} $X \underset{f}{\subset} Y_1$. On the other hand by Remark \ref{obssoporte} $\supp(\pi(Y_1)) = \emptyset$, then $Y_1 \subset \ker (\pi)= $Im$(f)\cong X $, then $\l(X) = \l(Y_1)$ and $X \cong Y_1$. Hence, $f$ is a split monomorphism and we deduce that $Y_2 \cong Z$.
\end{proof}
\end{lema}

\vspace{0.2cm}

\begin{obs}\label{remark idempotentes}
Given $A$ and $B$ Artin algebras, $ _AM_B$ and $ _BN_A$ bimodules, in case we set both bimodule homomorphisms $\alpha$ and $\beta$ as zero, considering the Artin algebra $\Lambda_{(0,0)} = \begin{pmatrix}
A & M \\
N & B\\
\end{pmatrix}$, as we have already seen,  $\mod A \hookrightarrow \mod \Lambda_{(0,0)} $ and  $\mod B \hookrightarrow \mod \Lambda_{(0,0)} $ are full embeddings of categories. Consider $\mathcal{E}_A=\{e_1, \ldots, e_n\}$ and $\mathcal{E}_B=\{f_1, \ldots, f_m\}$ two complete sets of primitive orthogonal idempotents of $A$ and $B$ respectively, then $$\mathcal{E}_{\Lambda} = \left\{ \begin{pmatrix}
e_1 & 0 \\
0   & 0 
\end{pmatrix}, \ldots, \begin{pmatrix}
e_n & 0 \\
0   & 0 
\end{pmatrix}, \begin{pmatrix}
0 & 0 \\
0 & f_1 
\end{pmatrix}, \ldots, \begin{pmatrix}
0 & 0 \\
0   & f_m 
\end{pmatrix}  \right\}$$ is a complete set of primitive orthogonal idempotents of $\Lambda$ (for short $\mathcal{E}_{\Lambda} = \{e_1, \ldots, e_n, f_1, \ldots , f_m\}$). 

Since $0 \rightarrow \begin{pmatrix}
0 & M\\ N & B 
\end{pmatrix} \rightarrow \Lambda \stackrel{\pi_A \ \ }{\rightarrow A} \rightarrow 0$ and $0 \rightarrow \begin{pmatrix}
A & M\\ N & 0 
\end{pmatrix} \rightarrow \Lambda \stackrel{\pi_B \ \ }{\rightarrow B} \rightarrow 0$ are exact sequences of right $\Lambda$-modules, then for $e_k, f_j \in \mathcal{E}_{\Lambda}$, we have the short exact sequences in $\mod \Lambda$

\begin{equation}\label{eq 1}
0\rightarrow e_kM \rightarrow e_k \Lambda \rightarrow e_k A \rightarrow 0 \text{, and } 0\rightarrow f_j N \rightarrow f_j \Lambda \rightarrow f_j B \rightarrow 0
\end{equation}
where $e_kM \in \mod B$ and $f_jN \in \mod A$.
\end{obs}

\vspace{0.2cm}

\begin{defi}\label{petite}
	Given $A$ and $B$ Artin algebras, $ _AM_B$ and $ _BN_A$ bimodules, we say that the Morita context algebra $\Lambda_{(0,0)} = \begin{pmatrix}
		A & M \\
		N & B\\
	\end{pmatrix}$ is of petite connection type if $_AM$ and $_BN$ are semisimple modules.
\end{defi}

Now, by Theorem \ref{radical}, we have the following remark.

\begin{obs}
If $\Lambda_{(0,0)} = \begin{pmatrix}
A & M \\
N & B\\
\end{pmatrix}$ is a Morita context algebra of petite connection type, then $$\rad \Lambda = \begin{pmatrix}
\rad A & M \\
N & \rad B\\
\end{pmatrix} = \begin{pmatrix}
\rad A & 0 \\
0 & 0\\
\end{pmatrix} \oplus \begin{pmatrix}
0 & 0 \\
N & 0\\
\end{pmatrix} \oplus \begin{pmatrix}
0 & M \\
0 & 0\\
\end{pmatrix}  \oplus \begin{pmatrix}
0 & 0\\
0 & \rad B\\
\end{pmatrix}$$ as $\Lambda$-modules,  where the first two direct summands are $A$-modules and the last two are $B$-modules.
\end{obs}

\vspace{0.2cm}

\begin{lema} \label{sizigia}
Given $\Lambda_{(0,0)} = \begin{pmatrix}
	A & M \\
	N & B\\
\end{pmatrix}$ a Morita context algebra of petite connection type, for any $X_A \in \mod A $ and $X_B \in \mod B$, we get that  $\Omega_{\Lambda}( X_A) = \Omega_A( X_A) \oplus \tilde{X}_B$ and $\Omega_{\Lambda}( X_B) = \Omega_B( X_B) \oplus \tilde{X}_A$, where $\tilde{X}_A \in \mod A$ and $ \tilde{X}_B \in \mod B$.  
\begin{proof}

Consider the following short exact sequences:

\begin{itemize}
\item[] $$0 \rightarrow \Omega_A (X_A) \rightarrow P_A(X_A) \rightarrow X_A\rightarrow 0, \mbox{ in } \mod A,$$

\item[] $$0 \rightarrow \Omega_{\Lambda} (X_A) \rightarrow P_{\Lambda}(X_A) \rightarrow X_A\rightarrow 0, \mbox{ in } \mod \Lambda.$$

\end{itemize}

From the previous exact sequences, we construct, by using Remark \ref{remark idempotentes}, the commutative diagram (in $\mod \Lambda$) with exact columns and rows
$$\xymatrix{  &&0\ar[d] & 0 \ar[d] & \\  & 0 \ar[d] \ar[r]  & \tilde{X}_B \ar[d]\ar[r]& \Omega_{\Lambda}(X_A)\ar[d] & \\   &  0 \ar[r] \ar[d] & P_{\Lambda}(X_A) \ar[d] \ar[r]^{1}& P_{\Lambda}(X_A)\ar[d] \ar[r]& 0  \\0 \ar[r] &\Omega_{A}
(X_A) \ar[r] \ar[d]^{1} & P_A(X_A) \ar[d] \ar[r] & X_A \ar[d] \ar[r] & 0 \\ &\Omega_{A}
(X_A) \ar[d] & 0 & 0 & \\ &0&&&}$$

By the Snake's Lemma, we have the following short exact sequence in $\mod \Lambda$
$$0 \rightarrow \tilde{X}_B \rightarrow \Omega_{\Lambda}(X_A) \rightarrow \Omega_A(X_A)\rightarrow 0,$$
where $\tilde{X}_B \in \mod B$, by  
applying Remark \ref{remark idempotentes} for the central column of the diagram. 
Since $\Omega_{\Lambda}(X_A) \subset \rad(P_{\Lambda}(X_A))$, it belongs to $\mod A \oplus \mod B \subset \mod \Lambda$. Finally, using Lemma \ref{lema2}, the result follows.
\end{proof}
\end{lema}

\vspace{0.2cm}

By iterating the syzygy operator using the previous lemma we obtain the following result.

\begin{coro}\label{318c}
	Given $\Lambda_{(0,0)} = \begin{pmatrix}
		A & M \\
		N & B\\
	\end{pmatrix}$ is a Morita context algebra of petite connection type, we have:
\begin{enumerate}
\item $\Omega^n_{A}(X_A) \stackrel{\oplus}{\hookrightarrow} \Omega^n_{\Lambda}(X_A)$ for any $X_A \in \mod A$ and $\Omega^n_{B}(X_B) \stackrel{\oplus}{\hookrightarrow} \Omega^n_{\Lambda}(X_B)$ for any $X_B \in \mod B$;

\item if $X_A \in \mod A$ is periodic as $A$-module, then $X_A$ is periodic as $\Lambda$-module. Analogously for $X_B \in \mod B$;

\item if $X_A \in \mod A$ is virtually periodic as $A$-module, then $X_A$ is virtually periodic as $\Lambda$-module. Analogously for $X_B \in \mod B$.

\end{enumerate}

\end{coro}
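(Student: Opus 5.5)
The plan is to prove all three items by induction on $n$ from Lemma \ref{sizigia}. The key point is that Lemma \ref{sizigia} splits $\Omega_\Lambda(X)$, for $X$ in $\mod A$ or $\mod B$, into a direct sum of an $A$-module and a $B$-module. Moreover, $\Omega_\Lambda$ commutes with finite direct sums, since projective covers of direct sums are direct sums of projective covers.

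\emph{Item 1.} We claim that for every $n\geq 1$,
$$\Omega^n_\Lambda(X_A)=\Omega^n_A(X_A)\oplus Y_A^{(n)}\oplus Y_B^{(n)}$$
with $Y_A^{(n)}\in\mod A$ and $Y_B^{(n)}\in\mod B$; the analogous statement holds for $X_B$. The case $n=1$ is Lemma \ref{sizigia}. For the inductive step, apply $\Omega_\Lambda$ summandwise:
\begin{itemize}
\item[] $\Omega_\Lambda(\Omega^n_A(X_A))=\Omega^{n+1}_A(X_A)\oplus(\text{a }B\text{-module})$ by Lemma \ref{sizigia} applied to the $A$-module $\Omega^n_A(X_A)$;
\item[] $\Omega_\Lambda(Y_A^{(n)})$ and $\Omega_\Lambda(Y_B^{(n)})$ are again sums of an $A$-module and a $B$-module, by the same lemma.
\end{itemize}
Collecting the $A$-parts and $B$-parts gives the claimed form at level $n+1$. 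In particular $\Omega^n_A(X_A)\stackrel{\oplus}{\hookrightarrow}\Omega^n_\Lambda(X_A)$, and the case $n=0$ is trivial.

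\emph{Item 2.} Suppose $X_A\stackrel{\oplus}{\hookrightarrow}\Omega^k_A(X_A)$ with $k\geq 1$. By item 1, $\Omega^k_A(X_A)\stackrel{\oplus}{\hookrightarrow}\Omega^k_\Lambda(X_A)$. Since being a direct summand is transitive, $X_A\stackrel{\oplus}{\hookrightarrow}\Omega^k_\Lambda(X_A)$, so $X_A$ is periodic as a $\Lambda$-module. Its $\Lambda$-period is at most its $A$-period.

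\emph{Item 3.} Suppose $X_A\stackrel{\oplus}{\hookrightarrow}\Omega^k_A(X')$ with $X'$ periodic over $A$. One might first worry that the periodic module $X'$ is not an $A$-module. This is not an issue, because $X'\in\mod A$ by definition, since virtual periodicity over $A$ is witnessed inside $\mod A$. By item 2, $X'$ is periodic over $\Lambda$. By item 1, $\Omega^k_A(X')\stackrel{\oplus}{\hookrightarrow}\Omega^k_\Lambda(X')$, hence $X_A\stackrel{\oplus}{\hookrightarrow}\Omega^k_\Lambda(X')$, and $X_A$ is virtually periodic over $\Lambda$.

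The $B$-cases are symmetric. The only real obstacle is the bookkeeping in item 1, namely that the extra summands can also spill into the other algebra's module category. This is handled by carrying the general form "$A$-part $\oplus$ $B$-part" through the induction rather than tracking the extra summands individually.
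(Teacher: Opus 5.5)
Your proof is correct and follows the paper's approach: the paper obtains this corollary by iterating Lemma \ref{sizigia}, and your induction, together with the transitivity arguments for the periodic and virtually periodic cases, writes that out in full. Tracking the extra summands as an $A$-part plus a $B$-part does no harm, but it is more than needed: it suffices to follow the single summand $\Omega^{n}_A(X_A)$ through the induction.
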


\begin{obs} \label{observacion} \

\begin{itemize}
\item In the Lemma \ref{sizigia}, for any $X_A \in \mod A$, the definition of the module $\tilde{X}_B$ it depends only on the $\topp(X_A)$, that is if $Y_A \in \mod A$ is such that $\topp(X_A) \cong \topp(Y_A)$ then $\tilde{X}_B \cong \tilde{Y}_B$. Analogously if $X_B,Y_B \in \mod B $ are such that $\topp(X_B) \cong \topp(Y_B)$, then $\tilde{X}_A \cong \tilde{Y}_A$. Therefore for any $X_A \in \mod A$, we have that $\tilde{X}_B\cong \widetilde{(\topp(X_A))}_B$. Hence we can focus only on semisimple modules.

\item We have $\Omega_{\Lambda}^n(X_A) =  \Omega_A^n(X_A)\ \oplus Y$, where $Y$ is a direct summand of $$( \oplus_{i = 0}^{n-1} \oplus_{a' \in \mathcal{E}_A} \Omega_{\Lambda}^i(\tilde{S}_{a'})) \oplus (\oplus_{i = 0}^{n-1} \oplus_{b' \in \mathcal{E}_B} \Omega_{\Lambda}^i(\tilde{S}_{b'})).$$

Analogously, $\Omega_{\Lambda}^n(X_B) = \Omega_B^n(X_B)\ \oplus Y$, where $Y$ is a direct summand of $$( \oplus_{i = 0}^{n-1} \oplus_{a' \in \mathcal{E}_A} \Omega_{\Lambda}^i(\tilde{S}_{a'})) \oplus (\oplus_{i = 0}^{n-1} \oplus_{b' \in \mathcal{E}_B} \Omega_{\Lambda}^i(\tilde{S}_{b'})).$$

\end{itemize}

\end{obs}

\section{Delooping level on Morita context algebras}

In this section, we study the delooping level ($\Dell$) for Morita context algebras. The central result, Theorem \ref{main theorem}, gives conditions under which the delooping level of the algebra is finite. We discuss also in this section specific cases where these conditions hold, linking the structural properties of syzygies to the finiteness of delooping levels.


Assume for the next two propositions that $A$ and $B$ are indecomposable algebras. This is not a restrictive hypothesis, since any Morita context algebra can be got as an iterated process of Morita context algebras constructed with indecomposable algebras components.
 
 In the first two propositions, the hypotheses provide sufficient conditions to prove that $\Dell$ of the Morita context algebra vanishes.
 
\begin{prop}\label{Dell=0}
	Assume $\Lambda_{(0,0)} = \begin{pmatrix}
		A & M \\
		N & B\\
	\end{pmatrix}$ is a Morita context algebra of petite connection type, such that $A$ and $B$ are non simple algebras, with $\Dell(A) = 0=\Dell(B)$. Then $\Dell(\Lambda_{(0,0)}) = 0$.

\begin{proof}

Consider a simple $\Lambda$-module $S$, then $S = S_a$ with $a \in \mathcal{E}_A $ or  $S=S_b$ with $b \in \mathcal{E}_B$. Let us prove the first case, the second one follows similarly.

\noindent Assume $S=S_a$, with $a \in \mathcal{E}_A $. Since $\Dell(A) = 0$ and $A$ is an indecomposable non simple algebra, then $S_a  \stackrel{\oplus}{\hookrightarrow} \Omega_A(X)$, for some $X \in \mod A$. On the other hand, by Lemma \ref{sizigia}, we have $\Omega_{\Lambda}(X) = \Omega_A(X) \oplus \tilde{X}_B$. Then  $S_a  \stackrel{\oplus}{\hookrightarrow} \Omega_A(X) \stackrel{\oplus}{\hookrightarrow} \Omega_{\Lambda}(X)$, hence $\Dell(S_a) = 0$ as $\Lambda$-module, for all  $a \in \mathcal{E}_A $. Therefore $\Dell (\Lambda_{(0,0)}) = 0$.
\end{proof}

\end{prop}

\begin{prop}\label{Dell=0.2}
	Assume $\Lambda_{(0,0)} = \begin{pmatrix}
		A & M \\
		N & B\\
	\end{pmatrix}$ is a Morita context algebra of petite connection type, such that $M\neq 0$ and $N\neq 0$ as bimodules, with $\Dell(A) = 0 = \Dell(B) = 0$. Then $\Dell(\Lambda_{(0,0)}) = 0$.
	
	\begin{proof}
		
 		 \noindent Consider a simple $\Lambda$-module $S$, then $S = S_a$ with $a \in \mathcal{E}_A $ or  $S=S_b$ with $b \in \mathcal{E}_B$. 
 		 
 		 \noindent If $S=S_a$, $a \in \mathcal{E}_A $, and $A$ is not a simple algebra, 
 		 the result follows from the arguments given at the proof of the last Proposition. In case $A$ is a simple algebra, since $N\neq 0$, then $S_a$ is a direct summand of $\Omega_{\Lambda}(\displaystyle \bigoplus_{b\in \mathcal{E}_B} S_b)$. Then $\Dell(S_a) = 0$ as $\Lambda$-module. In case $S=S_b$, $b \in \mathcal{E}_B$ is analogous.
 		 
	\noindent Therefore $\Dell (\Lambda_{(0,0)}) = 0$.
	\end{proof}	
\end{prop}

The case $M=0=N$ is trivial, so the remaining set of possible hypotheses is analysed in the following observation.

\begin{obs}
	Assume $\Lambda_{(0,0)} = \begin{pmatrix}
		\Bbbk & 0 \\
		N & B\\
	\end{pmatrix}$, that is $\Lambda_{(0,0)}$ is a one point extension algebra. 
	It can be seen that $\Omega^t_{\Lambda}(\Bbbk)=\Omega^{t-1}_{B}(_BN)$, if $t\geq 1$. Therefore $\Dell_{\Lambda}(\Bbbk)=2\!\!- \!\!\Dell_B(_BN) +1$. Then $\Dell(\Lambda_{(0, 0)})= 2\!\!- \!\!\Dell_B(_BN) +1$, if $2\!\!- \!\!\Dell_B(_BN) +1> \Dell (B)$. Otherwise 
	$\Dell(\Lambda_{(0, 0)})\leq \Dell (B)$.
	 
	\noindent In particular, if $\Dell (B)=0$, then 
	$\Dell(\Lambda_{(0, 0)})=
	 \begin{cases}
	 	 2\!\!- \!\!\Dell_B(_BN) +1, & \text{if}\ 2\!\! - \!\!\Dell_B(_BN) +1> 0\\
	 	 0, & \text{otherwise.}
	\end{cases}$\\
	Notice that the Kershaw-Rickard algebra  given in the Example \ref{ejemplo infinito} is of this sort (\cite{KR}).
	
	\end{obs}

The following theorem gives conditions under which the delooping level of the algebra $\Lambda$ is finite. We prove this result under very general hypotheses. The results that follow show the hypothesis are achievable. 

\begin{teo}\label{main theorem} 
	Let $\Lambda_{(0,0)} = \begin{pmatrix}
		A & M \\
		N & B\\
	\end{pmatrix}$ be a Morita context algebra of petite connection type with $\Dell(A) < \infty$ and $\Dell(B)< \infty$.  If there exists $n\in \mathbb{N}$ such that $\Omega_{\Lambda}^n((\oplus_{a \in \mathcal{E}_A}  \tilde{S}_a )\oplus (\oplus_{b \in \mathcal{E}_B}  \tilde{S}_b))$ is infinitely deloopable, then $\Dell(\Lambda_{(0,0)}) < \infty$. 

\begin{proof}

First observe that there is $k \in \mathbb{N}$ such that for $ a \in \mathcal{E}_A$ if $P_a \in \mathcal{P}_A \cap \add \Omega^k_{\Lambda}(\mod \Lambda)$, then $P_a$ is infinitely deloopable in $\Lambda$ (analogously for $b \in \mathcal{E}_B$).  

Consider $S$ a simple $\Lambda$-module, assume $S=S_a$, for $a \in \mathcal{E}_A $ (analogous in case $S=S_b$, for $b \in \mathcal{E}_B$). 

\noindent Since $\Dell(A) < \infty$, then there exists $Z_a\in \mod A$ and $P \in \mathcal{P}_A$ such that $\Omega_A^m(S_a)  \stackrel{\oplus}{\hookrightarrow} \Omega_A^{m+1}(Z_a) \oplus P$. We can consider $m \geq k$.
By Remark \ref{observacion}, we have that $\Omega_{\Lambda}^m(S_a) = \Omega_{A}^m(S_a) \oplus W$, where $W \in$ mod$\Lambda$, verifying the following:

\begin{itemize}

\item $\Omega_A^{m}(S_a)  \stackrel{\oplus}{\hookrightarrow} \Omega_A^{m+1}(Z_a) \oplus P \stackrel{\oplus}{\hookrightarrow} \Omega_{\Lambda}^{m+1}(Z_a) \oplus P$;


\item $W \stackrel{\oplus}{\hookrightarrow} ( \oplus_{i = 0}^{m} \oplus_{a' \in \mathcal{E}_A} \Omega_{\Lambda}^i(\tilde{S}_{a'})) \oplus (\oplus_{i = 0}^{m} \oplus_{b' \in \mathcal{E}_B} \Omega_{\Lambda}^i(\tilde{S}_{b'}))$. 

\end{itemize}
Since $P$ is infinitely deloopable, there is $X \in \mod \Lambda$ such that $P \stackrel{\oplus}{\hookrightarrow} \Omega_{\Lambda}^{m+1}(X) $, then: 
$$\Omega^m_{\Lambda}(S_a) = \Omega_A^m(S_a) \oplus W \stackrel{\oplus}{\hookrightarrow} \Omega_{\Lambda}^{m+1}(Z_a) \oplus P \oplus W  \stackrel{\oplus}{\hookrightarrow} \Omega_{\Lambda}^{m+1}(Z_a) \oplus \Omega^{m+1}(X) \oplus W. $$
By hypothesis $\Omega_{\Lambda}^n( ( \oplus_{i = 0}^{m} \oplus_{a' \in \mathcal{E}_A}
\Omega_{\Lambda}^i(\tilde{S}_{a'})) \oplus (\oplus_{i = 0}^{m} \oplus_{b' \in \mathcal{E}_B} \Omega_{\Lambda}^i(\tilde{S}_{b'})))$ is infinitely deloopable, then, for each $j\in \N$ there exists ${W}_j\in  \mod \Lambda$ such that  $\Omega_{\Lambda}^n(W)  \stackrel{\oplus}{\hookrightarrow}  \Omega_{\Lambda}^{n +j}({W}_j)$. Finally, we have: 

$$\Omega_{\Lambda}^{m+n}(S_a) \stackrel{\oplus}{\hookrightarrow}  \Omega_{\Lambda}^{m+n+1}(Z_a) \oplus \Omega_{\Lambda}^{m+n+1}(X) \oplus \Omega_{\Lambda}^{m+n+1}(W_{m+1})= \Omega_{\Lambda}^{m+n+1}(Z_a\oplus X \oplus W_{m+1}).$$  
Therefore $\Dell (S_a)\leq m+n$, for each $a \in \mathcal{E}_A$. An analogous proof in case $S=S_b$, for $b \in \mathcal{E}_B$.  We can conclude that $\Dell(\Lambda_{(0,0)}) < \infty$.

\end{proof}
\end{teo}

\begin{coro}\label{main coro}
Let $\Lambda_{(0,0)} = \begin{pmatrix}
	A & M \\
	N & B\\
\end{pmatrix}$ be a Morita context algebra of petite connection type such that $\Dell(A) < \infty$ and $\Dell(B)< \infty$. If there is $n \in \mathbb{N}$ such that $\Omega_B^n(\tilde{S}_a)$ and $\Omega_A^n(\tilde{S}_b)$ are infinitely deloopable for each $a \in \mathcal{E}_A$ and for each $b \in \mathcal{E}_B$ as $\Lambda$-modules, then $\Dell(\Lambda) < \infty$.

\begin{proof}

Observe again that there is $k \in \mathbb{N}$ such that for $ a \in \mathcal{E}_A$ if $P_a \in \mathcal{P}_A \cap \add \Omega^k_{\Lambda}(\mod \Lambda)$, then $P_a$ is infinitely deloopable in $\Lambda$ (analogously for $b \in \mathcal{E}_B$).   

If $X_a = \Omega_{B}^{n}(\tilde{S}_a)$ is infinitely deloopable (choose $n \geq k$), then there are $\Lambda$-modules $Z_j$ $\forall j \in \mathbb{N}$ such that $X_a \stackrel{\oplus}{\hookrightarrow} \Omega^j_{\Lambda}(Z_j) \oplus P_B$. Since $n \geq k$, then $P_B$ is infinitely deloopable as $\Lambda$-module. Finally $X_a$ is infinitely deloopable as $\Lambda$-module and we can apply Theorem \ref{main theorem} and the result follows.  
\end{proof}

\end{coro}

The following results shows how can be obtained examples of algebras satisfying Theorem \ref{main theorem} and Corollary \ref{main coro}.

\begin{coro}\label{examples}
Let $\Lambda_{(0,0)} = \begin{pmatrix}
		A & M \\
		N & B\\
	\end{pmatrix}$ be a Morita context algebra of petite connection type such that $\Dell(A) < \infty$ and $\Dell(B)< \infty$. Assume that any of the following statements hold: 

\begin{enumerate}
\item $\tilde{S}_a$ are virtually periodic as $A$-modules (or as $\Lambda$-modules)  for all $a \in \mathcal{E}_A$ and $\tilde{S}_b$ are virtually periodic as $B$-modules (or as $\Lambda$-modules) for all $b \in \mathcal{E}_B$;

\item the class $\{ \Omega_B^n(\tilde{S}_a),\ 
\Omega_A^n(\tilde{S}_b) \}_{a \in \mathcal{E}_A, b \in \mathcal{E}_B, n \in \mathbb{N}}$ is of finite representation type.
\end{enumerate}

Then $\Dell(\Lambda) < \infty$.
\begin{proof}
	\begin{enumerate}
		\item See Remark \ref{31/8} and Corollary \ref{318c}. 
		\item  Use Remark \ref{318r}.
	\end{enumerate} \end{proof}
\end{coro}

A way to obtain a Morita context algebra of petite connection type that satisfies the second item of the previous result is
the following.

\begin{prop}\label{modulos proyectivos}
Let $\Lambda_{(0,0)} = \begin{pmatrix}
	A & M \\
	N & B\\
\end{pmatrix}$ be a Morita context algebra of petite connection type. If $M_B$ is $B$-projective and $N_A$ is $A$-projective, then the class $\mathcal{X}=\{ \Omega_B^n(\tilde{S}_a),\ 
\Omega_A^n(\tilde{S}_b) \}_{a \in \mathcal{E}_A, b \in \mathcal{E}_B, n \in \mathbb{N}}\subset \mathcal{P}_A \cup \mathcal{P}_B$.  In particular $\mathcal{X}$ is of finite representation type.
\begin{proof}

Since $\Lambda = \begin{pmatrix}
A & M \\
0 & 0\\
\end{pmatrix} \oplus 
\begin{pmatrix}
0 & 0\\
N & B\\
\end{pmatrix}$ as a right $\Lambda$-module, then $\begin{pmatrix}
A & M \\
0 & 0\\
\end{pmatrix}$ is a projective right $\Lambda$-module. Then, by the short exact sequences \ref{eq 1}, we have the following short exact sequence of $\Lambda$-modules: 

$$0 \rightarrow \begin{pmatrix}
0 & M \\
0 & 0\\
\end{pmatrix} \rightarrow \begin{pmatrix}
A & M \\
0 & 0\\
\end{pmatrix} \rightarrow A \rightarrow 0$$ 

Since $P_{\Lambda}(A) = A\oplus M$ as $\Lambda$-modules, we conclude
$\tilde{S}_a$ is $B$-projective, for any ${a \in \mathcal{E}_A}$, and $\tilde{S}_b$ is $A$-projective, for any ${b \in \mathcal{E}_B}$.
Therefore $\Omega_{\Lambda}^n(S_a) = \Omega_A^n(S_a) \oplus P_A \oplus P_B$ and $\Omega_{\Lambda}^n(S_b) = \Omega_B^n(S_b) \oplus P'_B \oplus P'_A$ for any $n\in \N$, where $P_A, P'_A \in \mathcal{P}_A$, $P_B, P'_B \in \mathcal{P}_B$. Hence $\mathcal{X}$ is of finite representation type.
\end{proof}

\end{prop}

The Proposition below shows another way to create examples for Theorem \ref{main theorem}. In particular Example \ref{example 1} fulfills these conditions.

\begin{prop} \label{combi}
Let $A = \frac{\Bbbk Q_A}{I_A}$ and $B = \frac{\Bbbk Q_B}{I_B}$ be finite dimensional algebras. Consider $\Lambda = \frac{\Bbbk Q}{I}$, with:
\begin{itemize}
	\item[*] $Q_0 = (Q_A)_0 \sqcup (Q_B)_0$,
	\item[*] a set of arrows $\Gamma_1$ such that if $\delta \in \Gamma_1$ then $\st(\delta) \in (Q_A)_0 $ and $\tg(\delta) \in (Q_B)_0 $ or $\st(\delta) \in (Q_B)_0 $ and $\tg(\delta) \in (Q_A)_0 $,
	\item[*]  $Q_1 = (Q_A)_1 \sqcup (Q_B)_1 \sqcup \Gamma_1$, and 
	
	\item[*] $I = \langle I_A, I_B, \gamma\delta , \forall \gamma \in Q_1,\forall \delta \in \Gamma_1  \rangle $.

\end{itemize}

\noindent Consider $\Gamma'_0 = \{\st(\delta),\  \delta \in \Gamma_1 \}$, and $\Gamma'_1 = \{\delta \in \Gamma_1, \ \tg(\delta)\in \Gamma'_0 \}$. Suppose that for each $v_0 \in \Gamma'_0$ there is an arrow  $\rho \in \Gamma_1$ where $\tg(\rho) = v_0$. If $\Dell(A) < \infty$ and $\Dell (B) < \infty$, then $\Dell(\Lambda) < \infty$. 

\begin{proof}
By the definition of the ideal $I$, we can prove that $\tilde{S}_a$ and $\tilde{S}_b$ are infinitely deloopable as $\Lambda$-modules for all $a \in (Q_A)_0$ and for all $b \in (Q_B)_0$ respectively:\\
\noindent Suppose $a \in (Q_A)_0$, then we have two cases to consider: $\tilde{S}_a = 0$, or $\tilde{S}_a \not = 0$.

\noindent For the first case, it is clear that $\tilde{S_a} = 0$ is infinitely deloopable. For the second case, we have, by hypothesis, that there is an arrow $\gamma_0 \in \Gamma_1$ such that $\st(\gamma_0) = b_0$ and $\tg(\gamma_0) = a$. As a consequence $S_a$ is a direct summand of $\topp(\tilde{S}_{b_0})$, then $\tilde{S}_a  \stackrel{\oplus}{\hookrightarrow} \Omega_{\Lambda}^2(S_{b_0})$. Since $\gamma_0 \in \Gamma_1$ then $b_0\in \Gamma'_0$, we also have $\gamma_1 \in \Gamma_1$ such that $\tg(\gamma_1) = b_0$ and $\st(\gamma_1) = a_1$, then $S_{b_0}$ is a direct summand of $\topp(\tilde{S}_{a_1})$, then $\tilde{S}_{b_0}  \stackrel{\oplus}{\hookrightarrow} \Omega^2(S_{a_1})$, and $\tilde{S}_a  \stackrel{\oplus}{\hookrightarrow} \Omega^4(S_{a_1})$. Inductively, we get that $\tilde{S}_{a}$ is infinitely deloopable.  Hence the result follows from Theorem \ref{main theorem}.

\end{proof}

\end{prop}

\section{Other delooping levels}

This section extends the analysis to other variants of delooping:  the derived delooping level ($\DDell$) and the global delooping level ($\Dellm $). One of the aims of this section is to show how the finiteness of these invariants in the components $A$ and $B$ of a Morita context implies their finiteness in the whole algebra $\Lambda$.

\subsection{Results for the derived delooping level on Morita context algebras}

The result below follows by Remark \ref{observacion}, by imitating the proof of Theorem \ref{main theorem}.

\begin{prop}\label{kDell algebras de morita}	
	Let $\Lambda_{(0,0)} = \begin{pmatrix}
	A & M \\
	N & B\\
\end{pmatrix}$ be a Morita context algebra of petite connection type. Assume there exists $n\in \mathbb{N}$ such that $\Omega_{\Lambda}^n((\oplus_{a \in \mathcal{E}_A}  \tilde{S}_a )\oplus (\oplus_{b \in \mathcal{E}_B}  \tilde{S}_b))$ is infinitely deloopable, and let $X \in \mod A \ (\mod B)$. If $k-\Dell_A(X) < \infty \ (k-\Dell_B(X) < \infty)$, then $k-\Dell_{\Lambda}(X) < \infty$.
		
\end{prop}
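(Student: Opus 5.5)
We follow the proof of Theorem \ref{main theorem}, with the shift $1$ replaced by $k$. Take $X \in \mod A$ (the case $X\in\mod B$ is symmetric). The plan has three steps: split $\Omega^m_\Lambda(X)$ into an $A$-part and a correction term, deloop the $A$-part using $k-\Dell_A(X)<\infty$, and deloop the correction term using the infinite deloopability hypothesis.

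\textbf{Step 1: the decomposition.} Set $m_0 = k-\Dell_A(X)$. Then $\Omega_A^{m_0}(X) \stackrel{\oplus}{\hookrightarrow} \Omega_A^{m_0+k}(Z)\oplus P$ for some $Z\in\mod A$ and some $P\in\mathcal{P}_A$. This relation persists for every $m\geq m_0$ after applying $\Omega_A^{m-m_0}$, because syzygies preserve direct sums and kill projectives. So we may take $m\geq m_0$ as large as needed. By Remark \ref{observacion},
$$\Omega^m_\Lambda(X)=\Omega^m_A(X)\oplus W,$$
where $W$ is a direct summand of
$$V_m := \Big(\oplus_{i=0}^{m-1}\oplus_{a'\in\mathcal{E}_A}\Omega^i_\Lambda(\tilde S_{a'})\Big)\oplus\Big(\oplus_{i=0}^{m-1}\oplus_{b'\in\mathcal{E}_B}\Omega^i_\Lambda(\tilde S_{b'})\Big).$$

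\textbf{Step 2: the $A$-part.} Corollary \ref{318c}(1) lifts the $A$-relation to $\Lambda$:
$$\Omega^m_A(X)\stackrel{\oplus}{\hookrightarrow}\Omega^{m+k}_\Lambda(Z)\oplus P.$$
The remaining issue is the projective $A$-module $P$, which need not be projective over $\Lambda$. We handle it as in the main theorem. By Remark \ref{318r}(4), applied to the finite set of indecomposable projective $A$-modules, there is $k_0$ such that any $P_a$ lying in $\add\Omega^{k_0}_\Lambda(\mod\Lambda)$ is infinitely deloopable in $\Lambda$. Choose $m\geq k_0$. Then the summands of $P$ that survive as non-projective $\Lambda$-summands of $\Omega^m_\Lambda(X)$ are infinitely deloopable. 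Hence $P\stackrel{\oplus}{\hookrightarrow}\Omega^{m+k}_\Lambda(Y)$ up to $\Lambda$-projectives, for some $Y\in\mod\Lambda$.

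\textbf{Step 3: the correction term.} By hypothesis $\Omega^n_\Lambda(\oplus_a\tilde S_a\oplus\oplus_b\tilde S_b)$ is infinitely deloopable. By Remark \ref{318r}(3), so is every $\Omega^{n+i}_\Lambda$ of it. Hence $\Omega^n_\Lambda(V_m)$ is infinitely deloopable, and therefore so is its direct summand $\Omega^n_\Lambda(W)$. This gives $\Omega^n_\Lambda(W)\stackrel{\oplus}{\hookrightarrow}\Omega^{n+m+k}_\Lambda(W')$ for some $W'\in\mod\Lambda$.

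\textbf{Conclusion.} Applying $\Omega^n_\Lambda$ to the decomposition and combining Steps 2 and 3, in $\underline{\mod}\,\Lambda$ we get
$$\Omega^{m+n}_\Lambda(X)\stackrel{\oplus}{\hookrightarrow}\Omega^{m+n+k}_\Lambda(Z\oplus Y\oplus W').$$
Hence $k-\Dell_\Lambda(X)\leq m+n<\infty$.

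The main obstacle is Step 2: justifying that the leftover $A$-projective $P$ can itself be delooped $k$ times in $\Lambda$. This needs the choice of $k_0$ from Remark \ref{318r}(4), applied exactly as at the start of the proof of Theorem \ref{main theorem}, together with the observation that $P$ really appears inside a sufficiently high $\Lambda$-syzygy. That observation holds because $\Omega^m_A(X)$ is a summand of $\Omega^m_\Lambda(X)$ with $m\geq k_0$.
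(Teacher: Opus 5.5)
Your proposal is correct and follows the paper's approach: the paper's entire proof is the remark that the statement follows from Remark \ref{observacion} by imitating the proof of Theorem \ref{main theorem}, and you carry out that imitation with the shift $1$ replaced by $k$. Your handling of the leftover $A$-projective $P$ is, if anything, more explicit than the corresponding step in the paper; by Krull--Schmidt, $P$ may be taken to be the projective part of $\Omega^m_A(X)$, so it lies in $\add\Omega^m_\Lambda(\mod\Lambda)$ and is infinitely deloopable once $m\ge k_0$.
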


\begin{prop}\label{DDell=0}
For an Artin algebra $\Gamma$, $\DDell(\Gamma) = 0$ if and only if $\Dell(\Gamma)=0$.
\begin{proof}

By Theorem \ref{GUO&IGUSA}, if $\Dell(\Gamma) = 0$ then $\DDell(\Gamma)=0$. Conversely, if $M \in \mod \Gamma$ and $\DDell(M) = 0$, there is an exact sequence $0 \rightarrow C_0 \rightarrow M \rightarrow 0$ such that $\Dell(C_0) = 0$. Since $C_0 \cong M$, then $\Dell(M) = 0$.
\end{proof}

\end{prop}

As a consequence of the above result, we have a variant of Proposition \ref{Dell=0} for the derived delooping level. 

\begin{prop}
Assume $\Lambda_{(0,0)} = \begin{pmatrix}
	A & M \\
	N & B\\
\end{pmatrix}$ is a Morita context algebra of petite connection type
\begin{enumerate}
	\item 
If $A$ and $B$ are non simple algebras, with $\DDell(A) = 0=\DDell(B)$, then $\DDell(\Lambda_{(0,0)}) = 0$.
	
	\item 
 If $M\neq 0$ and $N\neq 0$ as bimodules, and $\DDell(A) = 0 = \DDell(B) = 0$, then $\DDell(\Lambda_{(0,0)}) = 0$.

\end{enumerate}

\end{prop}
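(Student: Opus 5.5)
The plan is to reduce both items to the corresponding statements for $\Dell$, namely Propositions \ref{Dell=0} and \ref{Dell=0.2}, using Proposition \ref{DDell=0} as a dictionary in both directions.

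First, translate the hypotheses. Since $A$ and $B$ are Artin algebras, Proposition \ref{DDell=0} applied to $\Gamma=A$ and to $\Gamma=B$ gives $\Dell(A)=0$ and $\Dell(B)=0$ from $\DDell(A)=0=\DDell(B)$. Concretely, for each simple $A$-module $S$, $\DDell(S)=0$ forces the resolution in the definition to be $0\to C_0\to S\to 0$ with $1\text{-}\Dell(C_0)=0$, i.e.\ $\Dell(S)=0$. The same holds for $B$.

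Next, apply the $\Dell$ results. In item (1), $\Lambda_{(0,0)}$ is of petite connection type, $A$ and $B$ are non simple, and $\Dell(A)=0=\Dell(B)$. These are exactly the hypotheses of Proposition \ref{Dell=0}, which yields $\Dell(\Lambda_{(0,0)})=0$. In item (2), the hypotheses $M\neq 0\neq N$ together with $\Dell(A)=0=\Dell(B)$ are those of Proposition \ref{Dell=0.2}, again yielding $\Dell(\Lambda_{(0,0)})=0$. Both propositions assume $A$ and $B$ indecomposable, which the paper declared non-restrictive; I will keep that standing assumption.

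Finally, translate back. Proposition \ref{DDell=0}, or directly Theorem \ref{GUO&IGUSA}, gives $0\le\DDell(\Lambda_{(0,0)})\le\Dell(\Lambda_{(0,0)})=0$, hence $\DDell(\Lambda_{(0,0)})=0$.

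There is no real obstacle. The only care needed is to check that Proposition \ref{DDell=0} is used at the level of algebras, so it must be applied to $A$ and $B$ separately and then to $\Lambda$. The indecomposability convention for $A$ and $B$ in Propositions \ref{Dell=0} and \ref{Dell=0.2} also needs to be carried along.
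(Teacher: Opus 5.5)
Your proposal is correct and is exactly the paper's argument: the paper derives this proposition directly from Proposition \ref{DDell=0}, translating $\DDell(A)=0=\DDell(B)$ into $\Dell(A)=0=\Dell(B)$, applying Proposition \ref{Dell=0} for item 1 or Proposition \ref{Dell=0.2} for item 2, and returning to $\DDell$ via $\DDell\le\Dell$. You are also right to keep the standing indecomposability assumption on $A$ and $B$, because without it the statement fails: take $A=B=\Bbbk\times\Bbbk[x]/\langle x^2\rangle$, $M=\Bbbk$ with $A$ and $B$ acting through their factors $\Bbbk$, and $N=0$; the hypotheses of item 1 hold, but $\Lambda_{(0,0)}$ has a block $\begin{pmatrix}\Bbbk&\Bbbk\\0&\Bbbk\end{pmatrix}$, so $\Dell(\Lambda_{(0,0)})=1$ and hence $\DDell(\Lambda_{(0,0)})\neq 0$.
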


\begin{coro}\label{Ddel algebras de morita}
	Let $\Lambda_{(0,0)} = \begin{pmatrix}
	A & M \\
	N & B\\
\end{pmatrix}$ be a Morita context algebra of petite connection type. Assume there exists $n\in \mathbb{N}$ such that $\Omega_{\Lambda}^n((\oplus_{a \in \mathcal{E}_A}  \tilde{S}_a )\oplus (\oplus_{b \in \mathcal{E}_B}  \tilde{S}_b))$ is infinitely deloopable. If $\DDell(A) < \infty$ and $\DDell(B) < \infty$, then $\DDell(\Lambda) < \infty$.
\begin{proof}
Let $S$ be a simple $\Lambda$-module. Assume  $S \in \mod A$ (the other case is similar). Since $\DDell(A) < \infty$, then there is a exact sequence in $\mod A$: $0 \rightarrow C_n \rightarrow C_{n-1} \rightarrow \ldots \rightarrow C_1 \rightarrow C_0 \rightarrow S \rightarrow 0$, such that $(i+1)-\Dell_{A} (C_i) < \infty,\ \forall i = 0, 1, \ldots , n$. By Corollary \ref{kDell algebras de morita}, we have $(i+1)-\Dell_{\Lambda} (C_i) < \infty,\ \forall i = 0, 1, \ldots , n$. Hence we deduce that $\DDell_{\Lambda}(S) < \infty$.
\end{proof}
	
\end{coro}

\subsection{Results for the global delooping level on Morita context algebras}
\begin{prop}

Let $\Lambda_{(0,0)} = \begin{pmatrix}
		A & M \\
		N & B\\
	\end{pmatrix}$ be a Morita context algebra of petite connection type with $\Dellm(A) < \infty$ and $\Dellm(B)< \infty$. If $M_B$ and $N_A$ are projective, then $\Dellm(\Lambda) < \infty$.

\begin{proof}

Consider $X \in \mod \Lambda$. By Remark \ref{radical}, since $M_B$, $N_A$ are projective as $B$-module and $A$-module respectively, we have that $\Omega_{\Lambda}(X) \hookrightarrow A^{k_1} \oplus B^{k_2}$. By Lemma \ref{lema1}, we deduce that $\Omega_{\Lambda}(X) = Y_1 \oplus Y_2$ such that $Y_1 \hookrightarrow A^{k_1}$ and $Y_2 \hookrightarrow B^{k_2}$, then there exist $X_1\in \mod A$ and $X_2 \in \mod B$ such that $\Omega_A(X_1)=Y_1\oplus P_A$ and $\Omega_B(X_2)=Y_2 \oplus P_B$, with $P_A \in \mathcal{P}_A$ and $P_B\in \mathcal{P}_B$. Then  $\Omega_{\Lambda}(X) = \Omega_A(X_1) \oplus \Omega_B(X_2) \oplus P_A \oplus P_B$, where $P_A \in \mathcal{P}_A$ and $P_B\in \mathcal{P}_B$.\\

On the other hand, by Proposition \ref{modulos proyectivos} and the first item of Remark \ref{observacion}, we have that for any $W\in \mod A$, $\Omega_{\Lambda}^n(W) = \Omega_A^n(W) \oplus Q_A \oplus Q_B$, with $Q_A \in \mathcal{P}_A$ and $Q_B\in \mathcal{P}_B$, and for any $Z\in \mod B$, $\Omega_{\Lambda}^n(Z) = \Omega_B^n(Z) \oplus Q'_A \oplus Q'_B$, with $Q_A$, $Q'_A$ projective $A$-modules, and 
$Q_B$, $Q'_B$ projective $B$-modules.\\

Using arguments similar to those set out in the proof of Theorem \ref{main theorem}, the thesis follows. 


 \end{proof}

\end{prop}

\section{Examples}

This final section provides explicit examples illustrating the theoretical results. Some examples show that the delooping level of a Morita context algebra $\Lambda$ can be smaller than that of its components $A$ and $B$, while others highlight the necessity of the assumptions in the main theorems. These examples clarify the scope and limitations of the results and connect theory with concrete cases.\\

First observe that Example \ref{ejemplo infinito} is under the hypothesis of Lemma \ref{sizigia} where ${Q_A}_0 = \{1\}$ and ${Q_B}_0 = \{2\}$. It shows it is necessary to add extra conditions to guarantee the finiteness of $\Dell(\Lambda)$ (see for instance \ref{main theorem}).\\  

The following examples show that we cannot obtain a bound for the (derived) delooping levels of $A$ and $B$ from the (derived) delooping level of $\Lambda$, even under the hypothesis of Theorem \ref{main theorem}. 

\begin{ej}\label{example 1}

Consider the algebra $\Lambda = \frac{\Bbbk Q}{F^2}$, where $Q$ is the quiver:

$$\xymatrix{1 \ar[r]^{\alpha_1} \ar@/^{3mm}/[d]^{\lambda} & 2 \ar[r]^{\alpha_2}    & \ldots \ar[r] & n-1 \ar[r]^{\alpha_{n-1}}  & n \\ 1' \ar@/^{3mm}/[u]^{\mu} \ar[r]^{\beta_1}  & 2' \ar[r]^{\beta_2} & \ldots  \ar[r] & m'-1 \ar[r]^{\beta_{m'-1}} & m'}$$


Consider $A = \frac{\Bbbk Q_A}{I_A}$ and $B = \frac{\Bbbk Q_B}{I_B}$ where $Q_A$ is the full subquiver of $Q$ generated by the vertex $\{1, \ldots, n\}$ and $Q_B$ is the full subquiver of $Q$ generated by the vertex $\{1', \ldots, m'\}$, $I_A = F^2_A$, and $I_B = F^2_B$.\\

Then $\Lambda \cong \begin{pmatrix} A & M \\ N & B \end{pmatrix}$ is a Morita context algebra under the hypothesis of Theorem  \ref{main theorem} and Corollary \ref{Ddel algebras de morita}. Therefore $\Dell(\Lambda)$ and
$\DDell(\Lambda)$ are finite.\\

More than that, since $A$ and $B$ are radical square zero algebras, then $\Fin(A^{op})=\fin(A^{op}) = n-1$ and $\Fin(B^{op})=\fin(B^{op}) = m'-1$, by item 2 of Theorem \ref{Igualdad}, $\Dell(A) = n-1$ and $\Dell(B) = m'-1$, and by Theorem \ref{GUO&IGUSA} $\DDell(A) = n-1$ and $\Dell(B) = m'-1$, However, since every simple $\Lambda$-module is a submodule of $\Lambda$, we have $\Dell(\Lambda) =0$. And, again by Theorem \ref{GUO&IGUSA}, we have $\DDell(\Lambda) = 0$. 


\end{ej}

\begin{ej}\label{example 2}
Consider the algebra $\Lambda = \frac{\Bbbk Q}{I}$, where $Q$ is the quiver:

$$ \xymatrix{1  \ar[r]^{\delta} \ar@/^{3mm}/[dd]^{\lambda_1}   & 2 \ar@(ru,rd)^{\beta} \ar@(lu,ru)^{\alpha} \ar@(ld,rd)_{\gamma} \\ & \\ 1'  \ar[r]^{\delta'} \ar@/^{3mm}/[uu]^{\lambda_2}  & 2' \ar@(ru,rd)^{\beta'} \ar@(lu,ru)^{\alpha'} \ar@(ld,rd)_{\gamma'} }$$

\noindent and the ideal $I = \langle I_q, I'_q, \lambda_i \lambda_j,\text{ for } i\not=j,\ \lambda_1 \delta',\ \lambda_2 \delta \rangle$ being $I_q$ and $I'_q$ are the same ideal as in Example \ref{ejemplo infinito} for the full subquivers generated by the vertices $1, 2$ and $1',2'$ respectively. It is clear that $\Lambda$ is a Morita context algebra, with $A$ and $B$ Artin algebras with $\Dell(A)=\Dell(B)=\infty$. Since $S_1 \subset \soc( P_{1'})$, $S_{1'} \subset \soc( P_1)$, $S_2 \subset \soc(P_2)$, and $S_2' \subset \soc(P_2')$, we have that $\Dell(S_1) = \Dell(S_{1'}) = \Dell(S_2) = \Dell(S_{2'}) = 0$. Hence $\Dell(\Lambda) = 0$. 

\end{ej}

The following example shows that in Proposition \ref{Dell=0}, we cannot put $\Dellm$ instead of $\Dell$.

\begin{ej}
Let $\Lambda = \frac{\Bbbk Q}{F^2}$ be a radical square algebra with $Q$ as follows
$$ \xymatrix{ 1 \ar@(dl,lu)^{\alpha} \ar@/^/[r]^{\beta}& 2 \ar@/^/[l]^{\gamma} \ar@(dr,ru)_{\delta} }$$
The algebra $\Lambda$ can be seen as a Morita context algebra as we show below:

$$\Lambda = \begin{pmatrix}
		A & S \\
		S & A\\
	\end{pmatrix}$$ 
where $A = \frac{\Bbbk [x]}{\langle x^2 \rangle }$ and $_AS$, $S_A$ are both simple modules.	
It is clear that $A$ is selfinjective, however $\Lambda$ is not selfinjective. That is $\Dellm(A)=0$ and $\Dellm(\Lambda)\neq 0$, using Corollary 3.16 of \cite{BLM}.\\
\end{ej}

\noindent {\bf Statement:} The authors declare that this manuscript is \textbf{\textit{AI free}}.

\end{document}